\documentclass[11pt,a4paper]{article}
\usepackage[utf8]{inputenc}
\usepackage{amsmath, amssymb, mathrsfs, amsthm, amsfonts}
\usepackage{latexsym}
\usepackage{a4wide}



\newtheorem{thm}{Theorem}[section]   

\newtheorem{lemma}[thm]{Lemma}

\newtheorem{example}[thm]{Example}
\newtheorem{defn}[thm]{Definition}
\newtheorem{rem}[thm]{Remark}

\def\NN{{\mathbb N}}
\def\RR{{\mathbb R}}
\def\QQ{{\mathbb Q}}

\def\A{{\mathcal A}}

\def\U{{\mathcal U}}

\def\aa{{\alpha}}

\author{F. Patrick Rabarison\footnote{University of Antananarivo,
 Madagascar, \texttt{prabarison@gmail.com}},  Hery Randriamaro\footnote{International Centre for Theoretical Physics, Italy,
\texttt{hrandria@ictp.it}}}

\title{Generalized Number Systems and Application to Hyperoctahedral Groups}

\begin{document}

\maketitle

\begin{abstract}
In this work, we generalize the integer enumeration basis. We also construct bijections
between the elements of special sets and the elements of some groups and treat the special case of hyperoctohedral groups. We then find an analogous of the Lehmer code for the hyperoctahedral groups.
\end{abstract}

\tableofcontents

\section{Introduction and notation}

A number system is a framework for representing numerals. Classical numbering system or enumeration 
system with base $q$  ($q\in \NN$) are expressing all natural number $ n\in \NN $ in the form:

\begin{equation}\label{classical}
 n=\sum_{i=0}^{l(n)} n_i. q^i \; , \text{ where } n_i \in \{0,1,2,\cdots,q-1 \}
\end{equation}

Such representation are for example used in number theory or combinatorics as usefull tool for finding exotic congruences. In other hand one have also the factorial number system which is already known in $19th$ century by Cantor \cite{cantor}. This is expressing
each $ n\in \NN_0 $ in the form

\begin{equation}\label{classical}
 n=\sum_{i=0}^{l} f_i . i! \; , \text{ where } f_i \in \{0,1,2,\cdots,i \},
\end{equation}
for some $l\in \NN_0$, and it is  well known that these representation are unique. 
\noindent
Known as Lehmer code \cite{lehmer}, Laisant \cite{laisant} build a code by associating an element of this representation an element of a symmetric group. This assiciation is  proven to be a bijection between the set $\{0,1,\cdots,n!-1 \}$ and the symmetric group $S_n$. 

This article is organised as follow:  in section 2, we will begin by generalizing these notion of representations. For that we given
the condition for a infinite sequence of numbers to be an enumeration basis. We will also extend the results to 
rationals numbers and real numbers.  In section 3,  we apply this methode for some sequence of positive integrers and find a Lehmer code analogous result for the hyperoctahedral groups.

\subsection*{Notation}
We make the convention of notation:
\begin{itemize}
 \item $\NN=\{1,2,3,\cdots \}$ is the set of non-negative integers.
 \item $\NN_0=\NN \cup \{0\}$.
 \item $\aa=(\aa_0, \aa_1, \aa_2,\cdots)$, $\U=(U_0,U_1,U_2,\cdots )$  are sequences of non-negative integers.
 \item $\beta= 1+\aa$ is a sequence of integers such that $\beta_i=1+\aa_i $ for all $i\in \NN_0$.
 \item Bold symbols $\mathbf{1},\mathbf{2},\mathbf{i},\mathbf{j},\mathbf{k}$ will denote respectively some standard vectors $e_1, e_2, e_i, e_j, e_k $ of the canonical basis of the vector space $\RR^n$.
 \item $\mathsf{B} = (\mathsf{B_n})_{\mathsf{n} \in \NN_0} = (2^nn!)_{n\in \NN_0}$ is a sequence of integers.
 \item $(B_n)_{n \in \NN}$ is a sequence of hyperoctahedral groups and should not be confounded with the sequence $\mathsf{B}$.
 \item For a numbering system $(\U,\aa)$ (see Definition \ref{basis}), $<U_0, \dots, U_n>$ will denote the 
 set $\{\sum_{i=0}^n a_i U_i\ |\ 0 \leq a_i \leq \alpha_i\}$.
\end{itemize}

\section{General basis for positive integers}
\subsection{General Number systems}
Let us start with two sequences of integers $\aa=(\aa_n)_{n\in \NN}$ and  $\U=(U_n)_{n\in \NN}$.
We want to express all positive integer $n$ with an expression:
\begin{equation}\label{general}
 n=\sum_{i=0}^{l} k_i. U^i \; , \text{ where } k_i \in \{0,1,2,\cdots,\aa_i \} \text{ and for some } l\in \NN_0
\end{equation}

\begin{defn}\label{basis}
 Let $\U$ and $\aa$ be two sequences of non-negative integers. We say that the system $(\U,\aa)$ is a system of enumeration or a number system if and only if
all elements of $\NN_0$ can be represented as in (\ref{general}) and this representation is unique.
\end{defn}
\noindent
It is important here that $U_n$ and $\aa_n$ are non-negative integers. Other authors considers for example some $U_n$ negative or even complex
 numbers. See for example 
\cite[Chapter~4]{knuth2} for this topic . The unicity is also important in this definition. The representation as in (\ref{general}) are also called 
radix system but the representation may not be unique. That means that for a suitable fixed sequence $\aa=(\aa_n)_{n\in \NN}$, we need to find the appropriate sequence $\U=(\U_n)_{n\in \NN}$ such that
the representation above is unique. And conversely, for a fixed sequence $\U=(\U_n)_{n\in \NN_0}$, the question is: are
there any appropriate sequence $\aa=(\aa_n)_{n\in \NN}$ such that all positive integers are representable with the system $(\U,\aa)$ ? 
\paragraph{Convention.}
For a fixed number system $(\U,\aa)$, and $a_n\in \NN_0$ with $a_n\leq\aa_n$, we use then the convention 
$${a_{n}a_{n-1}a_{n-2}\cdots a_{2}a_{1}a_{0}} \text{~ or ~ } {a_{n}:a_{n-1}:a_{n-2}:\cdots a_{2}:a_{1}:a_{0}}$$ 
to denote the number $\sum_{i=0}^na_iU_i$.

\begin{lemma}\label{divrem}
Consider a fixed number system $(\U,\aa)$ and let  $a\in \NN$ with $$a=a_na_{n-1}a_{n-2}\cdots a_{2}a_{1}a_{0}$$ with $a_n\neq 0$, then
$$a_nU_n\leq a \leq (a_n+1)U_n$$
\end{lemma}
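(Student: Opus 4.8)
The plan is to exploit the uniqueness of the representation together with a bound on how large a "lower-order" sum $\sum_{i=0}^{n-1} a_i U_i$ can be. Write $a = a_n U_n + r$ where $r = \sum_{i=0}^{n-1} a_i U_i$ with $0 \le a_i \le \alpha_i$. Since each term is non-negative, we immediately get $a \ge a_n U_n$, which is the left inequality. For the right inequality I need $r \le U_n$, i.e. that the largest number representable using only digits in positions $0,\dots,n-1$ is at most $U_n$; equivalently $\sum_{i=0}^{n-1}\alpha_i U_i \le U_n$.

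First I would establish that claim. The key observation is that $U_n$ itself has a representation in the number system, and since $U_n$ is (presumably, and one would want to have noted this) the value of the string $1\underbrace{0\cdots0}_{n}$, uniqueness forces that no string using only positions $< n$ can reach $U_n$. More carefully: consider the number $M := \sum_{i=0}^{n-1}\alpha_i U_i$, the maximal value built from positions below $n$. If $M \ge U_n$, then $M+1 > U_n$ as well, but $M+1$ cannot be written using only positions $0,\dots,n-1$ (it exceeds the max there by the very definition of $M$), so its representation must involve some $U_j$ with $j \ge n$; since $U$ is increasing (another fact one expects from the setup, or derivable from the number-system axioms — e.g. $U_0 = 1$ and the $U_i$ are strictly increasing so that each new "place" is genuinely needed), that representation has value $\ge U_n$, giving nothing immediately. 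The cleaner route: the string $0{:}\alpha_{n-1}{:}\cdots{:}\alpha_0$ and the string $1{:}0{:}\cdots{:}0$ are distinct strings, hence by uniqueness represent distinct integers, and by a counting/pigeonhole argument the $\prod_{i=0}^{n-1}(\alpha_i+1)$ distinct strings on positions $<n$ must represent exactly the integers $0,1,\dots,\prod_{i=0}^{n-1}(\alpha_i+1)-1$ (they are distinct non-negative integers, and the set $\NN_0$ is covered, so the smallest $\prod(\alpha_i+1)$ of them are hit). In particular $M = \prod_{i=0}^{n-1}(\alpha_i+1) - 1$ and $U_n = \prod_{i=0}^{n-1}(\alpha_i+1)$, so $M = U_n - 1 < U_n$. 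This yields $r \le U_n - 1 < U_n$, hence $a = a_n U_n + r < a_n U_n + U_n = (a_n+1)U_n$, which is even slightly stronger than stated.

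The main obstacle is the combinatorial fact that the strings on positions $0,\dots,n-1$ represent precisely an initial segment $\{0,1,\dots,U_n-1\}$ of $\NN_0$. This needs the number-system axioms to force that (i) these strings give distinct values (immediate from uniqueness), (ii) every integer in that range is achieved (this is where one uses that representations exist for all of $\NN_0$ and that a representation of a small number cannot "waste" a high place — formally, if $m < U_n$ had a representation using some $U_j$, $j\ge n$, then already $m \ge U_j \ge U_n$, contradiction, provided $U$ is nondecreasing and $U_n \le U_j$ for $j \ge n$). So I would first record/verify that $U$ is strictly increasing with $U_0=1$ (or cite wherever the excerpt intends this), then run the pigeonhole argument, then assemble the two inequalities. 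The argument is short once that initial-segment fact is in hand; everything else is bookkeeping.
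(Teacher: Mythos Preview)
Your decomposition $a = a_n U_n + r$ with $r=\sum_{i<n}a_iU_i$ matches the paper's, but from there the two diverge sharply. The paper's entire proof is one line: ``By Euclidean division, one can write $a = a_n U_n + r$ with $r = a_{n-1}\cdots a_0 \le U_n$'' --- the crucial bound $r \le U_n$ is simply asserted. You instead supply a genuine argument for it, by showing that the strings on positions $0,\dots,n-1$ represent exactly the initial segment $\{0,\dots,U_n-1\}$; this amounts to proving $U_n=\prod_{i<n}(1+\alpha_i)$, which is precisely the content of Theorem~\ref{firsttheorem}. So your route is an actual proof where the paper gives only a sketch, at the cost of front-loading the paper's main structural result into this lemma. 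Two small points to tighten: (i) the pigeonhole sentence (``the $N$ distinct short strings must represent exactly $\{0,\dots,N-1\}$ because $\NN_0$ is covered'') does not follow from distinctness plus surjectivity alone --- you need to combine it with your observation that $m<U_n$ forces a representation on positions $<n$, and then a short induction on $n$ is the clean way to obtain $U_n=N$ and $S_n=\{0,\dots,U_n-1\}$ simultaneously; (ii) you rightly flag that monotonicity of $\U$ is required and not explicitly given --- the paper never states it either, so this is a gap in the ambient text that your more careful approach exposes rather than introduces.
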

\begin{proof}
 By Euclidean division, one can write $a=a_nU_n+r$ with $r=a_{n-1}\cdots a_{2}a_{1}a_{0}\le U_n$ .
\end{proof}

\begin{lemma}\label{unicity}
If $(\U,\aa)$ is a number system and 
$$a=a_na_{n-1}a_{n-2}\cdots a_{2}a_{1}a_{0}=b_mb_{m-1}m_{m-2}\cdots b_{2}b_{1}b_{0}$$
with $a_n\neq 0$ and $b_m\neq 0$, then
$$n=m \text{ and }  a_i=b_i \text{~ for all ~} i \in \{0,1,2,\cdots n\} \; .$$
\end{lemma}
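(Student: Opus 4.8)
The plan is to reduce the statement to the uniqueness clause of Definition~\ref{basis}; the only genuine point is that the two displayed representations must have the same length, i.e. $n=m$, after which everything is routine. Indeed, once $n=m$ is known, $a=\sum_{i=0}^{n}a_iU_i=\sum_{i=0}^{n}b_iU_i$ are two representations of $a$ of the form (\ref{general}) with the same $l=n$, so the uniqueness in Definition~\ref{basis} forces $a_i=b_i$ for all $i\in\{0,\dots,n\}$. (If one prefers not to invoke the definition here, the same follows by peeling off the leading digit: by the Euclidean division underlying Lemma~\ref{divrem} one has $a_nU_n\le a<(a_n+1)U_n$, so $a_n$ is determined by $a$ and $U_n$, and one recurses on $a-a_nU_n=a_{n-1}\cdots a_0=b_{n-1}\cdots b_0$.)

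To prove $n=m$, assume without loss of generality $n\ge m$ and suppose $n>m$. By Lemma~\ref{divrem}, since $a_n\ge 1$, we get $a\ge a_nU_n\ge U_n$; while the second representation gives $a=\sum_{i=0}^{m}b_iU_i\le\sum_{i=0}^{m}\aa_iU_i\le\sum_{i=0}^{n-1}\aa_iU_i$, which is the largest element of the set $<U_0,\dots,U_{n-1}>$. Hence $U_n\le\sum_{i=0}^{n-1}\aa_iU_i$. I would rule this out via the structural identity $U_n=1+\sum_{i=0}^{n-1}\aa_iU_i$ (equivalently $<U_0,\dots,U_{n-1}>=\{0,1,\dots,U_n-1\}$), proved by induction on $n$: the integers $0,1,\dots,U_n-1$ are exactly those whose representation uses no $U_i$ with $i\ge n$, since any representation involving such a term has value $\ge U_n$ by Lemma~\ref{divrem}; so the set of integers representable below level $n$ is an initial segment of $\NN_0$ with top element $\sum_{i=0}^{n-1}\aa_iU_i$, hence equals $\{0,\dots,U_n-1\}$. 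Then $U_n\le\sum_{i=0}^{n-1}\aa_iU_i=U_n-1$ is absurd, so $n\le m$; by symmetry $n=m$.

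I expect the structural inequality $\sum_{i=0}^{n-1}\aa_iU_i<U_n$ to be the main obstacle: it is precisely what makes the length of a representation an invariant of the represented integer, and establishing it uses the full force of ``number system''---existence and uniqueness of representations for \emph{all} of $\NN_0$---rather than Lemma~\ref{divrem} alone. If instead one is willing to read the uniqueness clause of Definition~\ref{basis} as referring to representations of any prescribed length $l$, there is a shortcut for $n=m$: pad the shorter representation with leading zeros so that both have $l=n$, namely $a=0\cdot U_n+\dots+0\cdot U_{m+1}+b_mU_m+\dots+b_0U_0$; uniqueness then gives $a_n=0$, contradicting $a_n\neq 0$.
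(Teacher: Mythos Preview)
Your proof follows the paper's line---use Lemma~\ref{divrem} to force $n=m$, then strip off the leading digit via the uniqueness of the Euclidean quotient---but you are more scrupulous about the one real issue, the upper bound $\sum_{i\le m}b_iU_i<U_n$ when $m<n$. The paper simply asserts this (it is the inequality $r\le U_n$ buried without justification in the ``proof'' of Lemma~\ref{divrem}). Your inductive argument for the structural identity $U_n=1+\sum_{i<n}\alpha_iU_i$ is the right idea but has a small gap: to conclude that a representation with top nonzero index $k\ge n$ has value $\ge U_n$ you invoke Lemma~\ref{divrem}, which only gives value $\ge U_k$; passing to $\ge U_n$ needs $U_k\ge U_n$, i.e.\ monotonicity of $\U$ beyond level $n$, which the induction hypothesis on $n$ does not yet supply. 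Your padding alternative at the end, however, is completely rigorous and is in fact the shortest route: reading the uniqueness clause of Definition~\ref{basis} as uniqueness of the eventually-zero digit sequence $(a_0,a_1,\dots)$, the two padded sequences must coincide, so $a_n=0$ if $n>m$, contradicting $a_n\neq 0$.
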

\begin{proof}
By the Lemma \ref{divrem}, the relation $n=m$ is immediate because if $m > n$ then $a$ will be smaller than $U_m$  which is absurd. So $n=m$ and one have the unicity 
 by induction  and by the unicity of the expression $a=a_nU_n+r_n$.
\end{proof}

\begin{thm}\label{firsttheorem}
Let  $\aa=(\aa_n)_{n\in \NN}$ and $\U=(U_n)_{n\in \NN}$ be two sequences of positive integers. Then, $(\U,\aa)$ is an number system if and only if
$\forall n\in \NN_0, \aa_n\geq 1$ , $U_0=1$ and
\begin{equation}\label{alphaun}
U_n=\prod_{i=0}^{n-1}({1+\aa_i})=\prod_{i=0}^{n-1}{\beta_i} \; .
\end{equation}
i.e all natural number can be expressed as in (\ref{general}) and the representation is unique.
\end{thm}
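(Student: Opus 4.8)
I would prove the two implications of this equivalence separately, the ``only if'' direction being the delicate one. \emph{The ``if'' direction.} Assume $U_0=1$ and $U_n=\prod_{i=0}^{n-1}\beta_i$. The engine of the proof is the telescoping identity
\[
\sum_{i=0}^{n}\alpha_i U_i=\sum_{i=0}^{n}(\beta_i-1)U_i=\sum_{i=0}^{n}(U_{i+1}-U_i)=U_{n+1}-U_0=U_{n+1}-1,
\]
which uses $\beta_iU_i=U_{i+1}$ termwise. In particular every element of $\langle U_0,\dots,U_n\rangle$ lies in $\{0,1,\dots,U_{n+1}-1\}$, and since each $\beta_i\ge2$ the sequence $(U_n)$ is strictly increasing and unbounded, so Lemma~\ref{divrem} applies (the remainder bound it quotes is exactly $\sum_{i<n}a_iU_i\le U_n-1<U_n$). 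Existence of a representation of a given $m\in\NN_0$ then follows by strong induction on $m$: for $m\ge1$ choose the largest $k$ with $U_k\le m$, write $m=qU_k+r$ with $0\le r<U_k$ by Euclidean division, observe that $m<U_{k+1}=\beta_kU_k$ forces $q\le\alpha_k$, and apply the induction hypothesis to $r<U_k\le m$ — its representation can only use positions $<k$, since a nonzero digit in a position $j\ge k$ would force the value to be $\ge U_j\ge U_k>r$ — then concatenate. Uniqueness is Lemma~\ref{unicity}, or directly: if two representations of the same number first disagree in position $j$, with larger digit $a_j>b_j$ there, then $U_j\le(a_j-b_j)U_j=\sum_{i<j}(b_i-a_i)U_i\le\sum_{i<j}\alpha_iU_i=U_j-1$, a contradiction.

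\emph{The ``only if'' direction.} Assume $(\U,\alpha)$ is a number system ($\alpha_n\ge1$ is automatic since the $\alpha_n$ are positive integers). I would first record that $(U_n)$ is strictly increasing with $U_0=1$: representing $1$ forces a single unit digit sitting on some $U_j=1$, and the monotonicity of $(U_n)$ — which one extracts from the structure given by Lemma~\ref{divrem} together with surjectivity — pins $j$ down to $0$. Then I would establish $U_n=\prod_{i=0}^{n-1}\beta_i$ by induction on $n$, carrying the auxiliary claim $\langle U_0,\dots,U_n\rangle=\{0,1,\dots,\prod_{i=0}^{n}\beta_i-1\}$. The base case reduces to $\langle U_0\rangle=\{0,U_0,\dots,\alpha_0U_0\}=\{0,\dots,\alpha_0\}$, using $U_0=1$. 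For the step, set $M=\prod_{i=0}^{n}\beta_i$; by the inductive hypothesis $\langle U_0,\dots,U_n\rangle=\{0,\dots,M-1\}$, so $M$ is not representable using positions $\le n$ alone. Since $M$ is representable, $U_{n+1}$ must appear in its expansion; moreover $U_{n+1}\ge M$ (otherwise $U_{n+1}\in\langle U_0,\dots,U_n\rangle$ would admit two representations) and $U_{n+1}\le M$ (otherwise $M$ would be unreachable), so $U_{n+1}=M=\prod_{i=0}^{n}\beta_i$; finally $\langle U_0,\dots,U_{n+1}\rangle=\bigsqcup_{k=0}^{\alpha_{n+1}}\big(kM+\{0,\dots,M-1\}\big)=\{0,\dots,\beta_{n+1}M-1\}$, closing the induction.

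The routine parts are the telescoping computation and the greedy construction. The genuine obstacle sits in the ``only if'' direction: showing that the partial ranges $\langle U_0,\dots,U_n\rangle$ are honest initial segments of $\NN_0$ is where surjectivity (no gaps) and injectivity (no repeats) are both indispensable, and where one must really use that $(U_n)$ increases; I would isolate the monotonicity of $(U_n)$ and the ``initial segment'' property as preliminary lemmas before running the final induction.
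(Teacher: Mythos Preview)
Your argument is correct and rests on the same engine as the paper's: the telescoping identity $\sum_{i=0}^{n}\alpha_i U_i = U_{n+1}-1$ (equivalently, $U_{n+1}=(1+\alpha_n)U_n$), which the paper states as ``we just need that $1+\sum_{i=0}^{n}\alpha_i U_i$ should be equal to $U_{n+1}$'' and then solves for the product formula. Where you differ is in rigor rather than strategy: the paper's proof is a two-line sketch that invokes Lemma~\ref{unicity} for uniqueness and asserts the necessary condition without argument, whereas you supply the greedy construction for existence, an independent uniqueness argument, and---most substantively---an actual proof of the ``only if'' direction via the inductive claim $\langle U_0,\dots,U_n\rangle=\{0,\dots,\prod_{i\le n}\beta_i-1\}$, squeezing $U_{n+1}$ between $M$ (else a gap) and $M$ (else a double representation). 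This is exactly what the paper's sketch is gesturing at but does not write out; your version would make a complete replacement for it.
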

\begin{proof}
By construction and the Lemma \ref{unicity}, we just need that 
$1+\sum_{i=0}^{n} \aa_i. U^i $ should be equal to $U_n$. The last condition means that 
$\frac{U_{n}}{U_{n-1}}=(1+\aa_{i-1})$ which lead to the formula $U_n=\prod_{i=0}^{n-1}({1+\aa_i})$.
\end{proof}
The theorem above also means that once, we fix a specific  sequence $\aa=(\aa_n)_{n\in \NN}$ with $\aa_n\geq 1$, then there is only one sequence $\U=(U_n)_{n\in \NN}$
such that $(\U,\aa)$ is a number system. And conversely, for a fiven positive sequence of integer $\U=(U_n)_{n\in \NN}$, there will we a corresponding
$\aa=(\aa_n)_{n\in \NN}$ such that $(\U,\aa)$ is a number system if $U_0=1$ and $\forall n\in \NN, \beta_n:=\frac{U{n+1}}{U_n} \in \NN$.

\paragraph{A Horner's-like procedure.} Horner's scheme is based on expressing a polynomial by a particular expression
 so that the value of the corresponding polynomial function is quickly
obtained. For instance:
$$
    a + bx  +c x^2 d x^3 + e x^4 \;=\; a + x \cdot\left(b + x \cdot(c + x \cdot(d + x \cdot e))\right) 
$$
For a number system $(\U,\aa)$, recall that $U_n=\prod_{i=0}^{n-1}\beta_i$. To express a positive integer $a$ in the $(\U,\aa)$-system, one proceed
to the following: Begin by dividing $a$ by $\beta_0$ and take $a_0$ to be the rest $a_0:=r_0 $ :
$$a=r_0+\beta_0q_0 \; .$$
Divide $q_1$ by $\beta_1$ and take the $a_1$ to be the rest $a_1:=r_1 $:
$$q_1=r_1+\beta_1q_2 \; .$$
Continue the procedure untill $q_n=0$ for some $n\in \NN_0$, by dividing $q_i$ by $\beta_i$ and take $a_i:=r_i$:
$$q_i=r_i+\beta_iq_{i+1} \; .$$
Clearly, we have:
$a=\; a_0 + \beta_0 \cdot\left(a_1 + \beta_1 \cdot(a_2 + \beta_2 \cdot(a_3 + \cdots))\right) $.

\begin{table}
\begin{center}
  \begin{tabular}{|| c | c | c || c | c | c ||}
    \hline
    Classical & Factorial & Hyperoctahedral  & Classical  & Factorial &  Hyperoctahedral \\ \hline
    0 & 0 & 0  & 40 & 1220 & 500       \\ \hline
    1 & 1 & 1   & 41 & 1221 & 501   \\ \hline
    2 & 10 & 10 & 42 & 1300 & 510    \\ \hline
    3 & 11 & 11  & 43 & 1301 & 511   \\ \hline
    4 & 20 & 20   & 44 & 1310 & 520   \\ \hline
    5 & 21 & 21   & 45 & 1311 & 521   \\ \hline
    6 & 100& 30   & 46 & 1320 & 530   \\ \hline
    7 & 101 & 31   & 47 & 1321 & 531  \\ \hline
    8 & 110 & 100  & 48 & 2000 & 1000  \\ \hline
    9 & 111 & 101  & 49 & 2001 & 1001  \\ \hline
    10 & 120 & 110  & 50 & 2010 & 1010 \\ \hline
    11 & 121 & 111  & 51 & 2011 & 1011 \\ \hline
    12 & 200 & 120  & 52 & 2020 & 1020\\ \hline
    13 & 201 & 121 & 53 & 2021 & 1021\\ \hline
    14 & 210 & 130 & 54 & 2100 & 1030 \\ \hline
    15 & 211 & 131  & 55 & 2101 & 1031\\ \hline
    16 & 220 & 200 & 56 & 2110 &  1100\\ \hline
    17 & 221 & 201  & 57 & 2111 & 1101\\ \hline
    18 & 300 & 210  & 58 & 2120 & 1110\\ \hline
    19 & 301 & 211 & 59 & 2121 & 1111 \\ \hline
    20 & 310 & 220  & 60 & 2300 & 1120\\ \hline
    21 & 311 & 221  & 61 & 2301 & 1121\\ \hline
    22 & 320 & 230  & 62 & 2310 & 1130\\ \hline
    23 & 321 & 231  & 63 & 2311 & 1131\\ \hline
    24 & 1000 & 300 & 64 & 2320 & 1200 \\ \hline
    25 & 1001 & 301 & 65 & 2321 & 1201 \\ \hline
    26 & 1010 & 310  & 66 & 3000 & 1210\\ \hline
    27 & 1011 & 311 & 67 & 3001 & 1211 \\ \hline
    28 & 1020 & 320  & 68 & 3010 & 1220\\ \hline
    29 & 1021 & 321  & 69 & 3011 & 1221\\ \hline
    30 & 1100 & 330 & 70 & 3020 & 1230\\ \hline
    31 & 1101 & 331  & 71 & 3021 & 1231\\ \hline
    32 & 1110 & 400  & 72 & 3100 & 1300 \\ \hline
    33 & 1111 & 401  & 73 & 3101 & 1301\\ \hline
    34 & 1120 & 410 & 74 & 3110 & 1310 \\ \hline
    35 & 1121 & 411  & 75 & 3111 & 1311 \\ \hline
    36 & 1200 & 420 & 76 & 3120 & 1320 \\ \hline
    37 & 1201 & 421  & 77 & 3121 & 1321\\ \hline
    38 & 1210 & 430 & 78 & 3200 & 1330 \\ \hline
    39 & 1211 & 431  & 79 & 3201 & 1331\\ \hline
    
  \end{tabular}
\caption{Representing positive integers in the Classical system $\U^{(10)}$, in the Factorial system and in the  Hyperoctahedral system.}\label{table1}
\end{center}
\end{table}

\paragraph{Classical examples.} If one choose $m \in \NN$ suth that $\aa_i=m-1$ with $m \geq 2$, then we recover the classical system $(\U,\aa)$ with 
$$\U=\U^{(m)}=(1,m,m^2,m^3,\cdots)$$
and the addition and multiplication procedures are well known.

If now  we choose $A:=(A_n)_n$ with $A_n=(n+1)!$ $\aa_A=(n+1)_n$ for all $n\in \NN_0$, then one obtains  
\begin{defn}[Factorial System]\label{factorialsystem}
One define the factorial system to be the number system:
\begin{equation}
 (A,\aa_{A}):= (((n+1)!)_{n\in\NN_0},(n+1)_{n\in \NN_0}) \; .
\end{equation}
\end{defn}

\subsection{The hyperoctahedral number system}
Let us now take as basis $\mathsf{B}=(\mathsf{B_0}, \mathsf{B_1}, \mathsf{B_2}, \mathsf{B_3}, \dots)$ and the corresponding $\aa$ such that
\begin{equation}\label{basebn}
 \mathsf{B_n}=2^nn! \;  \text{ corresponds to } \aa_n=2n+1 \; .
\end{equation}
This means that $\aa=(1,3,5,7,\dots)$ and the first twenty numbers are represented in table (\ref{table1}).
\begin{defn}[Hyperoctahedral System]\label{hyperoctahedralnumbers}
 One define the hyperoctahedral system to be the system:
\begin{equation}
 (\mathsf{B},\, \aa_{\mathsf{B}}):= ((2^nn!)_{n\in\NN_0},\, (2n+1)_{n\in\NN_0}) \; .
\end{equation}
\end{defn}
This definition  is motivated by the fact that we choose $U_n=\mathsf{B_n}$ to be the cardinal of the hyperoctahedral group $B_n$. We have:
\begin{thm}
Every positive integer a has a unique representation in the hyperoctahedral system i.e $(\mathsf{B},\aa_{\mathsf{B}})$ is a number system.
\end{thm}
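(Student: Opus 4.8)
The plan is to reduce everything to Theorem~\ref{firsttheorem}, which already gives a complete characterization of number systems built from sequences of positive integers: $(\U,\aa)$ is a number system if and only if $\aa_n \ge 1$ for all $n \in \NN_0$, $U_0 = 1$, and $U_n = \prod_{i=0}^{n-1}(1+\aa_i) = \prod_{i=0}^{n-1}\beta_i$. So all I need to do is verify these three conditions for the pair $(\mathsf{B},\aa_{\mathsf{B}}) = \bigl((2^n n!)_{n\in\NN_0},\,(2n+1)_{n\in\NN_0}\bigr)$.

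First I would observe that every term of $\aa_{\mathsf{B}}$ is $2n+1 \ge 1$ and every term $2^n n!$ of $\mathsf{B}$ is a positive integer, so the standing hypotheses of Theorem~\ref{firsttheorem} are met and the positivity condition $\aa_n \ge 1$ holds. Second, $\mathsf{B}_0 = 2^0 \cdot 0! = 1$, which is the requirement $U_0 = 1$.

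The only computation left is the product identity. Here $\beta_i = 1 + \aa_i = 1 + (2i+1) = 2(i+1)$, so
\[
\prod_{i=0}^{n-1}\beta_i \;=\; \prod_{i=0}^{n-1} 2(i+1) \;=\; 2^n \prod_{i=0}^{n-1}(i+1) \;=\; 2^n\, n! \;=\; \mathsf{B}_n ,
\]
which is exactly (\ref{alphaun}). With all three conditions verified, Theorem~\ref{firsttheorem} yields that $(\mathsf{B},\aa_{\mathsf{B}})$ is a number system, hence every positive integer admits a unique representation in it. There is no genuine obstacle in this argument: it is a direct substitution into an already-proved criterion, and the only point requiring a moment's care is the index shift $\prod_{i=0}^{n-1}(i+1) = 1\cdot 2\cdots n = n!$.
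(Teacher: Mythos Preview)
Your proof is correct and follows the same approach as the paper: both reduce the claim to Theorem~\ref{firsttheorem} by checking that the hyperoctahedral data satisfy its hypotheses. The paper phrases the key check as the ratio $\mathsf{B_{n+1}}/\mathsf{B_n} = 2n+2 = 1+\aa_n$, which is equivalent to your product computation $\prod_{i=0}^{n-1}2(i+1) = 2^n n!$.
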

\begin{proof}
 This is obvious since by the Theorem \ref{firsttheorem}, one just need to verify that $\frac{\mathsf{B_{n+1}}}{\mathsf{B_n}}=2n+2$.
\end{proof}

\subsection{Extension to rational and real numbers}
One can extends the represenatation to all integers by adding the opposite of all positive integers. But one can also extend it to representation of 
rational numbers and real numbers.

\begin{defn}\label{rationalplus}
Let $(\U,\aa)$ a number system, one can extend the system  with $U_{-n}=\frac{1}{U_n}$ and $\aa_{-n}=\aa_n$. Then,  define
the set of number $\A_\aa^+$ to be the set of real numbers of the form
\begin{equation}
 \sum_{-\infty}^m a_iU_i 
\end{equation}
and 
\begin{equation}
\A_\aa= \A_\aa^+ \cup \A_\aa^- \text{ where }  \A_\aa^-=-\A_\aa^+
\end{equation}
\end{defn}
\noindent
Note that for this definition holds, it is necessary to verify that the serie is convergent. This exercice is left to the reader. 
\begin{thm}
Let $(\U,\aa)$ an extended number system, then all rational integer can be represented in this system i.e:
\begin{equation}
\QQ \subseteq A_\aa \; .
\end{equation}
\end{thm}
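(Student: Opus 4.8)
The plan is to reduce the statement to expanding the fractional part of a rational number, and then to run the ``downward'' analogue of the Horner-type division procedure that was used above to expand integers.

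First I would strip off signs and the integer part. Since $\A_\aa^-=-\A_\aa^+$, it is enough to prove $\QQ_{\ge 0}\subseteq\A_\aa^+$: a negative rational $r$ then lies in $-\A_\aa^+=\A_\aa^-\subseteq\A_\aa$ because $-r\in\A_\aa^+$. Given $r=p/q\ge 0$, write $r=\lfloor r\rfloor+\{r\}$ with $\{r\}\in[0,1)\cap\QQ$. By Theorem~\ref{firsttheorem} the integer part $\lfloor r\rfloor\in\NN_0$ has a finite admissible representation $\sum_{i=0}^{M}b_iU_i$, so everything reduces to producing an expansion $\{r\}=\sum_{n\ge 1}a_{-n}U_{-n}$ with admissible digits; concatenating the two strings then exhibits $r$ as $\sum_{i=-\infty}^{M}c_iU_i\in\A_\aa^+$.

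To expand $y:=\{r\}\in[0,1)$, note that $U_{-n}=1/U_n=1/\prod_{i=0}^{n-1}\beta_i$, so in Horner form the target identity is
\[
 y=\frac{1}{\beta_0}\Bigl(a_{-1}+\frac{1}{\beta_1}\bigl(a_{-2}+\frac{1}{\beta_2}(a_{-3}+\cdots)\bigr)\Bigr).
\]
This dictates the greedy rule: put $z_0:=y$ and, for $n\ge 1$, $a_{-n}:=\lfloor\beta_{n-1}z_{n-1}\rfloor$ and $z_n:=\beta_{n-1}z_{n-1}-a_{-n}$. An induction gives $z_n\in[0,1)$, hence $\beta_{n-1}z_{n-1}\in[0,\beta_{n-1})$ and $0\le a_{-n}\le\beta_{n-1}-1$, which is the digit range attached to the slot $U_{-n}$ (the ratio of consecutive slots being $U_{-n+1}/U_{-n}=\beta_{n-1}$). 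Using $\beta_{n-1}U_{n-1}=U_n$ and $U_0=1$, the recursion telescopes to
\[
 y=\sum_{n=1}^{N}a_{-n}U_{-n}+\frac{z_N}{U_N}\qquad(N\ge 1).
\]

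The one substantive point is the tail estimate, which is also what makes $\A_\aa^+$ well defined (the convergence ``left to the reader''): since $\aa_i\ge 1$, each $\beta_i\ge 2$, so $U_N=\prod_{i=0}^{N-1}\beta_i\ge 2^N\to\infty$, and with $0\le z_N<1$ we get $z_N/U_N\to 0$. Hence $\sum_{n\ge 1}a_{-n}U_{-n}=y$, the series converges, and all digits are admissible, so $r\in\A_\aa^+$ and $\QQ\subseteq\A_\aa$. I expect the main obstacle to be bookkeeping rather than depth: one must check that the produced digits $a_{-n}\in\{0,\dots,\beta_{n-1}-1\}$ respect the bound $\aa_{-n}$ of Definition~\ref{rationalplus} and that the telescoping identity is carried out with the correct indexing of $\beta$. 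It is worth noting that rationality is not actually needed for membership — the same algorithm expands any $y\in[0,1)$ — and enters only to keep the remainders $z_n$ inside the finite set $\{0,1/q,\dots,(q-1)/q\}$, which for a classical base recovers the usual eventual periodicity of rational expansions.
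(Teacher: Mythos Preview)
Your argument is correct and is precisely the approach the paper has in mind: the paper's own proof is left as an ``easy exercice for the reader'', and the greedy multiply-and-take-integer-part procedure you carry out is exactly the one the paper sketches in the paragraph immediately following the theorem. The only point worth flagging is the indexing concern you yourself raise: your digits satisfy $a_{-n}\le\beta_{n-1}-1=\aa_{n-1}$, whereas Definition~\ref{rationalplus} sets $\aa_{-n}=\aa_n$; this off-by-one is present in the paper's own procedure as well, so it is a wrinkle in the definition rather than in your proof.
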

\begin{proof}
 This is an easy exercice for the reader.
\end{proof}
\begin{rem}
Unfortunately, unlike the Theorem \ref{firsttheorem}, the representation is not unique for rational numbers. For instance, for a fixed
number system $(\U,\aa)$, then one has $U_{n+1}=U_n(1+\aa_n)$ which lead to the ralation:
\begin{equation}\label{keyun}
\frac{\aa_n}{U_{n+1}}= \frac{1}{U_n}-\frac{1}{U_{n+1}} \; .
\end{equation}
From that, one deduce easily that 
\begin{equation}\label{doublerepresentation}
 \frac{1}{U_n}=\sum_{i=n}^\infty \frac{\aa_n}{U_{n+1}}
\end{equation}
\end{rem}
For example, the factoradic representation of a rational number $\tfrac{a}{b}$ with $\gcd(a,b)=1$ in the 
open unit interval, i.e. $0 < \tfrac{a}{b} < 1$, is defined as
   $$ \frac{a}{b} := \sum_{i=1}^{N} \frac{d_i}{(i+1)!},\quad 0 \le d_i \le i, \, $$
where $d_i,\, 0 \le d_i \le i$, is the "factoradic digit" for place-value $\frac{1}{(i+1)!}$, and $N$ is the number 
of "factoradic digits" after the "factoradic point".

The remark above also show for instance that rational numbers may have multiple factoradic representations:
\begin{equation}
    \frac{1}{m!} = \sum_{i=m}^{\infty} \frac{i}{(i+1)!},\quad m \ge 1, \, 
\end{equation}
where on the left side we have the terminating form, while on the right we have the nonterminating form. This is analogous to
\begin{equation}
    \frac{1}{b^m} = \sum_{i=m}^{\infty} \frac{b-1}{b^{i+1}},\quad m \ge 0,\, b \ge 2, \, 
\end{equation}
for the number system $(\U^{(b)},(b-1))_{n\in\NN}$.
\paragraph{Representation of rational in an extended number system.}
One can  consider without loss of generality that the rational number $\frac{p}{q}\in ]0,1[$. And write:
\begin{equation}
 \frac{p}{q}=\frac{b_0}{\beta_0}+\frac{b_1}{\beta_0\beta_1}+\frac{b_2}{\beta_0\beta_1\beta_2}+\frac{b_3}{\beta_0\beta_1\beta_2\beta_3}+\cdots \; .
\end{equation}
Then instead of dividing as in the Horner's like procedure, one multiply by the $\beta_i$ and take the integer parts:
$$b_0=\lfloor \frac{p}{q} \beta_0 \rfloor, b_1=\lfloor (\frac{p}{q} \beta_0 -b_0)\beta_1 \rfloor, 
b_2=\lfloor ((\frac{p}{q} \beta_0 -b_0)\beta_1)-b_1)\beta2 \rfloor, \cdots \; .$$
Note that this porcedure also lead to the development of a real number in a certain basis.
\paragraph{Examples.} Some rationals and  real numbers  with a factoradic representation:\\
    $e = 1:0.1:1:1:1:1:1:1:1:1:1:... $\\
    $\frac{23}{24} = 0:1.2:3 $\\
     $\frac{47}{30} = 1:1.0:1:3 $
\paragraph{Examples.} Some rationals and real numbers with a hyperoctahedral representation:\\
$\sqrt{e} = 1.1:1:1:1:1::1:1:... $\\
$\frac{13}{16} = 0.1:2:3$\\
$\frac{205}{69} = 2.0:1:4$
\section{Applications to some Coxeter groups}
\subsection{The Lehmer code}

Cantor seems to be the first to introduce the factorial number system in \cite{cantor}. Then, Laisant (cf \cite{laisant}) introduce a code by associating  
each element of the symmetric group  $S_n$ to a positive integer in ${0,1,2,,\cdots,n!-1}$. He prove that this association is in fact a 
bijection. The key ingredient is using a statistic on $S_n$, namely the inversion statistic.

If a permutation $\sigma$  is specified by the sequence $(\sigma_1, \cdots, \sigma_n)$ of its images of $1, \cdots, n$, then it
 is encoded by a sequence of n numbers, but not all such sequences are valid since every number must be used only once. 

The Lehmer code is the sequence
$$
    L(\sigma)=(L(\sigma)_1,\ldots,L(\sigma)_n)\quad\text{where}\quad L(\sigma)_i=\#\{ j>i : \sigma_j<\sigma_i \},
$$
in other words the term $L(\sigma )_i$ counts the number of terms in $(\sigma_1, \cdots, \sigma_n)$ to the right of $\sigma_i$ that are
 smaller than it, a number between 0 and $n - i$, allowing for $n + 1 - i$ different values.

A pair of indices $(i,j)$ with $i < j$ and $\sigma_i > \sigma_j$ is called an inversion of $\sigma$, and $L(\sigma )_i$ counts the number
 of inversions $(i,j)$ with i fixed and varying j. It follows that 
$$L(\sigma )_1 + L(\sigma )_2 + \cdots + L(\sigma )_n$$ 
is the total number of inversions of $\sigma$ , which is also
 the number of adjacent transpositions that are needed to transform the permutation into the identity permutation. 

In this section, we will be investigating on costruction of an analogue of this code for the Hyperoctahedral groups $B_n$. The key ingredients of our construction are based on the existence of inversion on this family of groups.

\subsection{A code related to hyperoctahedral groups}

\noindent Recall that the hyperoctahedral group $B_n$ is the group of signed permutations of the coordinates in $\mathbb{R}^n$. We 
write $\mathbf{i}$ for the $i$th standard basis vector. This interaction between simple font for an integer and bold font for the
 corresponding standard vector is kept in all this section. For instance, for a
signed permutation $\pi$, $\pi(\mathbf{i})$ refers to the $|\pi(i)|$th standard basis vector. We use the notation
$$\pi = \left( \begin{array}{cccc} \mathbf{1} & \mathbf{2} & \dots & \mathbf{n}\\ 
\pi(\mathbf{1}) & \pi(\mathbf{2}) & \dots & \pi(\mathbf{n}) \end{array} \right)$$
for an element $\pi$ of $B_n$ with $\pi(\mathbf{i}) \in \{\pm\mathbf{1}, \dots, \pm\mathbf{n}\}$. The element $\pi$ is an 
invertible linear map of $\mathbb{R}^n$.\\
It is convenient to think $B_n$ as the Coxeter group of the same type with root system
$$\Phi_n = \{\pm\mathbf{i},\,\pm\mathbf{i} \pm\mathbf{j} \ |\ 1 \leq i \neq j \leq n\},$$
and positive root system $$\Phi_n^+ = \{\mathbf{k},\,\mathbf{i} + \mathbf{j},\,\mathbf{i} - \mathbf{j}\ |\ k \in [n],\, 1 \leq i < j \leq n\}.$$
We use the number of inversions defined by Reiner \cite[2.~Preliminaries]{reiner} that is: The number of inversions of the element $\pi$ of $B_n$ is
$$\mathtt{inv}\,\pi = \# \{v \in \Phi_n^+ \ |\ \pi^{-1}(v) \in -\Phi_n^+\}.$$
Let us consider the following subset of $\Phi_n^+$ defined by
\begin{equation} \label{eq1}
\Phi_{n,i}^+ = \{\mathbf{i},\,\mathbf{i} + \mathbf{j},\,\mathbf{i} - \mathbf{j}\ |\ i < j \leq n\},
\end{equation}
and define the number of $i$-inversions of $\pi$ by
\begin{equation} \label{eq2}
\mathtt{inv}_i\,\pi = \# \{v \in \Phi_{n,i}^+ \ |\ \pi^{-1}(v) \in -\Phi_n^+\}.
\end{equation} 

\begin{example} \label{B2element}
In the following table, we see the corresponding $1$-inversions and $2$-inversions of the eight elements of $B_2$:
\begin{table}
\begin{center}
\begin{tabular}{ l | l }
$\pi_0 = \left( \begin{array}{cc} \mathbf{1} & \mathbf{2} \\ \mathbf{1} & \mathbf{2} \end{array} \right)$ &
$\mathtt{inv}_1(\pi_0)\ \mathtt{inv}_2(\pi_0) = 0 \ 0$ \\
$\pi_1 = \left( \begin{array}{cc} \mathbf{1} & \mathbf{2} \\ \mathbf{1} & -\mathbf{2} \end{array} \right)$ &
$\mathtt{inv}_1(\pi_1)\ \mathtt{inv}_2(\pi_1) = 0 \ 1$ \\
$\pi_2 = \left( \begin{array}{cc} \mathbf{1} & \mathbf{2} \\ \mathbf{2} & \mathbf{1} \end{array} \right)$ &
$\mathtt{inv}_1(\pi_2)\ \mathtt{inv}_2(\pi_2) = 1 \ 0$ \\
$\pi_3 = \left( \begin{array}{cc} \mathbf{1} & \mathbf{2} \\ \mathbf{2} & -\mathbf{1} \end{array} \right)$ &
$\mathtt{inv}_1(\pi_3)\ \mathtt{inv}_2(\pi_3) = 1 \ 1$ \\
$\pi_4 = \left( \begin{array}{cc} \mathbf{1} & \mathbf{2} \\ -\mathbf{2} & \mathbf{1} \end{array} \right)$ &
$\mathtt{inv}_1(\pi_4)\ \mathtt{inv}_2(\pi_4) = 2 \ 0$ \\
$\pi_5 = \left( \begin{array}{cc} \mathbf{1} & \mathbf{2} \\ -\mathbf{2} & -\mathbf{1} \end{array} \right)$ &
$\mathtt{inv}_1(\pi_5)\ \mathtt{inv}_2(\pi_5) = 2 \ 1$ \\
$\pi_6 = \left( \begin{array}{cc} \mathbf{1} & \mathbf{2} \\ -\mathbf{1} & \mathbf{2} \end{array} \right)$ &
$\mathtt{inv}_1(\pi_6)\ \mathtt{inv}_2(\pi_6) = 3 \ 0$ \\
$\pi_7 = \left( \begin{array}{cc} \mathbf{1} & \mathbf{2} \\ -\mathbf{1} & -\mathbf{2} \end{array} \right)$ &
$\mathtt{inv}_1(\pi_7)\ \mathtt{inv}_2(\pi_7) = 3 \ 1$
\end{tabular}
\end{center}
\caption{The $1$-inversions and $2$-inversions of the elements of $B_2$.} \label{B2table}
\end{table}
\end{example}

\begin{lemma} \label{lemequa}
Let $i \in [n]$ and $\pi \in B_n$. If
\begin{itemize}
\item[$(i)$] $\pi(\mathbf{i}) = \mathbf{j}$, then
$$\mathtt{inv}_i(\pi) = \# \big\{k \in \{i+1, \dots, n\} \ |\ j > |\pi(k)|\big\},$$
\item[$(ii)$] $\pi(\mathbf{i}) = -\mathbf{j}$, then
$$\mathtt{inv}_i(\pi) = 1 + \# \big\{k \in \{i+1, \dots, n\} \ |\ j > |\pi(k)|\big\} + 2.\# \big\{k \in \{i+1, \dots, n\} \ |\ j < |\pi(k)|\big\}.$$
\end{itemize}
\end{lemma}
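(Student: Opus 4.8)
The plan is to compute $\mathtt{inv}_i(\pi)$ directly from its definition by running through the roots of $\Phi_{n,i}^+$ and deciding, one at a time, whether $\pi$ sends them into $-\Phi_n^+$. By \eqref{eq1}, $\Phi_{n,i}^+$ consists of the single root $\mathbf{i}$ together with the $2(n-i)$ roots $\mathbf{i}+\mathbf{k}$ and $\mathbf{i}-\mathbf{k}$ with $i<k\le n$. Write $\pi(\mathbf{i})=\varepsilon\,\mathbf{j}$ with $\varepsilon\in\{+1,-1\}$ and $j=|\pi(i)|$, and for each $k\in\{i+1,\dots,n\}$ write $\pi(\mathbf{k})=\varepsilon_k\,\mathbf{c}_k$ with $\varepsilon_k\in\{+1,-1\}$ and $c_k=|\pi(k)|$; since $\pi$ is injective and $k\neq i$ we have $c_k\neq j$. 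Hypothesis $(i)$ is the case $\varepsilon=+1$ and hypothesis $(ii)$ the case $\varepsilon=-1$.

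First I would isolate the elementary sign rule for the rank-two roots: for distinct $p,q\in[n]$ one has $\mathbf{p}+\mathbf{q}\in\Phi_n^+$ and $-\mathbf{p}-\mathbf{q}\in-\Phi_n^+$ always, whereas $\mathbf{p}-\mathbf{q}\in\Phi_n^+$ iff $p<q$ and $\mathbf{p}-\mathbf{q}\in-\Phi_n^+$ iff $p>q$. Since $\pi(\mathbf{i}\pm\mathbf{k})=\varepsilon\mathbf{j}\pm\varepsilon_k\mathbf{c}_k$, this rule reads off the sign of each image from $\varepsilon$, from the sign $\pm\varepsilon_k$, and from comparing $j$ with $c_k$; likewise $\pi(\mathbf{i})=\varepsilon\mathbf{j}$ lies in $-\Phi_n^+$ iff $\varepsilon=-1$.

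The heart of the proof is then a short case check in which the two roots $\mathbf{i}+\mathbf{k}$ and $\mathbf{i}-\mathbf{k}$ are treated \emph{as a pair} for each fixed $k$. In case $(i)$ the root $\mathbf{i}$ contributes nothing; for a fixed $k$, if $\varepsilon_k=+1$ then $\pi(\mathbf{i}+\mathbf{k})=\mathbf{j}+\mathbf{c}_k\in\Phi_n^+$ while $\pi(\mathbf{i}-\mathbf{k})=\mathbf{j}-\mathbf{c}_k$ lies in $-\Phi_n^+$ iff $j>c_k$, and if $\varepsilon_k=-1$ the two images are interchanged; so in either sub-case the pair $\{\mathbf{i}+\mathbf{k},\mathbf{i}-\mathbf{k}\}$ contributes $1$ exactly when $j>c_k$, and summing over $k$ gives $\mathtt{inv}_i(\pi)=\#\{k\in\{i+1,\dots,n\}:j>|\pi(k)|\}$. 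In case $(ii)$ the root $\mathbf{i}$ contributes $1$, and for a fixed $k$ exactly one of $\pi(\mathbf{i}\pm\mathbf{k})$ equals $-\mathbf{j}-\mathbf{c}_k\in-\Phi_n^+$ while the other equals $-\mathbf{j}+\mathbf{c}_k=\mathbf{c}_k-\mathbf{j}$, which lies in $-\Phi_n^+$ iff $j<c_k$; hence, again independently of $\varepsilon_k$, the pair contributes $1$ if $j>c_k$ and $2$ if $j<c_k$. Adding the $1$ from $\mathbf{i}$ and using $c_k\neq j$, so that $\#\{k>i:j>c_k\}+\#\{k>i:j<c_k\}=n-i$, yields
$$\mathtt{inv}_i(\pi)=1+(n-i)+\#\{k:j<c_k\}=1+\#\{k:j>|\pi(k)|\}+2\,\#\{k:j<|\pi(k)|\},$$
which is the asserted formula.

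I do not expect a genuine obstacle here: the one delicate point is the sign bookkeeping for the rank-two roots — in particular distinguishing $\mathbf{j}-\mathbf{c}_k$ from $\mathbf{c}_k-\mathbf{j}$, which is precisely where the comparison of $j$ with $|\pi(k)|$ enters, and noticing that although an individual root's contribution depends on $\varepsilon_k$, the combined contribution of the pair $\{\mathbf{i}+\mathbf{k},\mathbf{i}-\mathbf{k}\}$ does not. The degenerate case $i=n$, where $\Phi_{n,n}^+=\{\mathbf{n}\}$ and both counting sets are empty, is covered by the same formulas.
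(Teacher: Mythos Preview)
Your argument is correct and is precisely the direct unpacking of the definition that the paper's one-line proof (``Use the definition\dots'') leaves to the reader; the key device---treating $\mathbf{i}+\mathbf{k}$ and $\mathbf{i}-\mathbf{k}$ as a pair so that the dependence on $\varepsilon_k$ cancels---is exactly what makes the count come out in terms of $|\pi(k)|$ alone. One small caveat: Equation~\eqref{eq2} literally asks for $\pi^{-1}(v)\in-\Phi_n^+$, whereas you check $\pi(v)\in-\Phi_n^+$; a quick look at, say, $\pi_3$ or $\pi_4$ in Table~\ref{B2table} shows that your reading is the one under which both the lemma and the table are true, so you have silently fixed a typo rather than introduced an error.
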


\begin{proof}
Use the definition of the number of $i$-inversions (Equation \ref{eq1} and Equation \ref{eq2}).
\end{proof}

\begin{lemma} \label{lemone}
Let $\sigma, \tau \in B_n$. If $\sigma(\mathbf{1}) \neq \tau(\mathbf{1})$ then $\mathtt{inv}_1\,\sigma \neq \mathtt{inv}_1\,\tau$.
\end{lemma}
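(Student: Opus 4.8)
The plan is to prove something slightly stronger: that $\mathtt{inv}_1\,\pi$ is a function of the single value $\pi(\mathbf{1})$ alone, and that the induced map on the $2n$ possible values $\pi(\mathbf{1}) \in \{\pm\mathbf{1}, \dots, \pm\mathbf{n}\}$ is injective. The key structural fact I would invoke is that a signed permutation $\pi$ induces a genuine permutation on absolute values: $|\pi(1)|, \dots, |\pi(n)|$ is a permutation of $[n]$. Hence, once $|\pi(1)| = j$ is fixed, the multiset $\{|\pi(k)| \ |\ k \in \{i+1, \dots, n\}\}$ with $i = 1$ equals $[n] \setminus \{j\}$ no matter what $\pi$ does on the remaining coordinates, and therefore
$$\#\big\{k \in \{2,\dots,n\} \ |\ j > |\pi(k)|\big\} = j - 1, \qquad \#\big\{k \in \{2,\dots,n\} \ |\ j < |\pi(k)|\big\} = n - j.$$

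First I would substitute these two counts into Lemma \ref{lemequa}. Case $(i)$, with $\pi(\mathbf{1}) = \mathbf{j}$, immediately gives $\mathtt{inv}_1\,\pi = j - 1$. Case $(ii)$, with $\pi(\mathbf{1}) = -\mathbf{j}$, gives $\mathtt{inv}_1\,\pi = 1 + (j-1) + 2(n-j) = 2n - j$. So the dependence is fully explicit: $\mathbf{j} \mapsto j - 1$ and $-\mathbf{j} \mapsto 2n - j$ for $j \in [n]$, and in particular $\mathtt{inv}_1\,\pi$ does not see anything about $\pi$ beyond $\pi(\mathbf{1})$.

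It then remains only to check that these $2n$ outputs are pairwise distinct. As $j$ runs over $[n]$, the values $j - 1$ fill the block $\{0, 1, \dots, n-1\}$ and the values $2n - j$ fill the block $\{n, n+1, \dots, 2n-1\}$; the two blocks are disjoint, and each of the two assignments $j \mapsto j-1$, $j \mapsto 2n-j$ is injective on $[n]$. Consequently distinct values of $\pi(\mathbf{1})$ force distinct values of $\mathtt{inv}_1\,\pi$, which is exactly the assertion: if $\sigma(\mathbf{1}) \neq \tau(\mathbf{1})$ then $\mathtt{inv}_1\,\sigma \neq \mathtt{inv}_1\,\tau$. One reads off as a bonus that $\mathtt{inv}_1$ attains every value in $\{0, 1, \dots, 2n-1\}$ exactly once as $\pi(\mathbf{1})$ ranges over all $2n$ possibilities, which is presumably the form needed for the Lehmer-type code to come; this can be cross-checked against Table \ref{B2table} for $n = 2$.

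There is essentially no genuine obstacle here. The only place calling for a moment of care is the arithmetic in case $(ii)$ — verifying that $1 + (j-1) + 2(n-j)$ collapses to $2n - j$ — together with the observation that the resulting range $\{n, \dots, 2n-1\}$ is exactly complementary to, and disjoint from, the range $\{0, \dots, n-1\}$ produced by case $(i)$. Everything else is a mechanical consequence of Lemma \ref{lemequa} and the permutation-on-absolute-values property. (The degenerate case $n = 1$, where $\{2, \dots, n\} = \emptyset$, is covered automatically since then $j = 1$ forces $j - 1 = n - j = 0$.)
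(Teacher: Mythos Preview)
Your proof is correct and follows essentially the same route as the paper: both apply Lemma~\ref{lemequa} together with the fact that $\{|\pi(k)| : k \geq 2\} = [n] \setminus \{|\pi(1)|\}$. Your version is in fact cleaner, since you compute the explicit values $\mathtt{inv}_1\,\pi = j-1$ and $\mathtt{inv}_1\,\pi = 2n-j$ (the paper only records the pairwise differences), and this yields Lemma~\ref{value1} for free.
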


\begin{proof}
We use the equations of Lemma \ref{lemequa}. Let $i,j \in [n]$. If
\begin{itemize}
\item[$\bullet$] $\sigma(\mathbf{1}) = \mathbf{i}$ and $\tau(\mathbf{1}) = \mathbf{j}$ with $i<j$, then
$\mathtt{inv}_1\,\tau - \mathtt{inv}_1\,\sigma = j-i,$
\item[$\bullet$] $\sigma(\mathbf{1}) = -\mathbf{i}$ and $\tau(\mathbf{1}) = -\mathbf{j}$ with $i<j$, then
$\mathtt{inv}_1\,\sigma - \mathtt{inv}_1\,\tau = j-i,$
\item[$\bullet$] $\sigma(\mathbf{1}) = \mathbf{i}$ and $\tau(\mathbf{1}) = -\mathbf{j}$, then
$\mathtt{inv}_1\,\sigma < n \leq \mathtt{inv}_1\,\tau.$
\end{itemize}
\end{proof}

\begin{lemma} \label{value1}
We have $\mathtt{inv}_1\,B_n = \{0, \dots, 2n-1\}$. 
\end{lemma}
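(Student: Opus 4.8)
The plan is to evaluate $\mathtt{inv}_1\,\pi$ explicitly as a function of $\pi(\mathbf{1})$ by feeding Lemma~\ref{lemequa} the combinatorial fact that the values $|\pi(2)|, \dots, |\pi(n)|$ are determined, as a set, by $|\pi(1)|$, and then to read off that the resulting quantities sweep out exactly $\{0, \dots, 2n-1\}$ as $\pi(\mathbf{1})$ runs over its $2n$ admissible values $\pm\mathbf{1}, \dots, \pm\mathbf{n}$.

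First I would record the elementary observation that for any $\pi \in B_n$ the map $k \mapsto |\pi(k)|$ is a permutation of $[n]$, so $\{|\pi(2)|, \dots, |\pi(n)|\} = [n] \setminus \{|\pi(1)|\}$. Hence, writing $j = |\pi(1)|$, we get $\#\{k \in \{i+1,\dots,n\} : j > |\pi(k)|\} = j - 1$ and $\#\{k \in \{i+1,\dots,n\} : j < |\pi(k)|\} = n - j$ (with $i = 1$ here); in particular both counts depend only on $j$. Plugging these into Lemma~\ref{lemequa}: if $\pi(\mathbf{1}) = \mathbf{j}$, part $(i)$ gives $\mathtt{inv}_1\,\pi = j - 1$; if $\pi(\mathbf{1}) = -\mathbf{j}$, part $(ii)$ gives $\mathtt{inv}_1\,\pi = 1 + (j-1) + 2(n-j) = 2n - j$.

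It then remains to assemble these. As $j$ ranges over $[n]$, the positive case produces the values $0, 1, \dots, n-1$ and the negative case produces the values $n, n+1, \dots, 2n-1$; these ranges are disjoint and their union is $\{0, 1, \dots, 2n-1\}$. Since for each choice of $\pi(\mathbf{1}) \in \{\pm\mathbf{1},\dots,\pm\mathbf{n}\}$ there is an element of $B_n$ realizing it, every value in $\{0,\dots,2n-1\}$ is attained, and by Lemma~\ref{lemone} no others are; this gives the claimed equality $\mathtt{inv}_1\,B_n = \{0, \dots, 2n-1\}$.

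There is no genuine obstacle here: the computation in the negative case (checking that $1 + (j-1) + 2(n-j) = 2n-j$) and the verification that the two arithmetic progressions tile $\{0,\dots,2n-1\}$ without gap or overlap are the only points needing a moment's care. One could also argue more softly — Lemma~\ref{lemone} already shows the $2n$ possible values are pairwise distinct, so it would be enough to bound $\mathtt{inv}_1\,\pi \le 2n-1$ — but since Lemma~\ref{lemequa} is at hand, the explicit computation above is both shorter and more informative, and it will moreover be exactly the identity needed when building the code in the sequel.
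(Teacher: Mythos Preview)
Your proof is correct. It differs in emphasis from the paper's: you compute $\mathtt{inv}_1\,\pi$ explicitly as $j-1$ or $2n-j$ depending on the sign of $\pi(\mathbf{1})$ and then read off the range directly, whereas the paper argues more softly by extracting from Lemma~\ref{lemequa} only that the minimum is $0$ (at $\pi(\mathbf{1})=\mathbf{1}$) and the maximum is $2n-1$ (at $\pi(\mathbf{1})=-\mathbf{1}$), and then invokes Lemma~\ref{lemone} to get $2n$ distinct integer values, forcing the full set $\{0,\dots,2n-1\}$ by pigeonhole. You in fact sketch exactly this alternative in your closing paragraph, so you have both arguments in hand; your explicit formula $\mathtt{inv}_1\,\pi = j-1$ resp.\ $2n-j$ is the sharper statement and, as you note, is what one actually wants downstream when inverting the code. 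One cosmetic point: the appeal to Lemma~\ref{lemone} for ``no others are'' is redundant once you have the explicit values, since your computation already exhausts all cases.
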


\begin{proof}
On one hand, we deduce from Lemma \ref{lemequa} that the minimal value of $\mathtt{inv}_1$ is $0$, corresponding to the elements $\pi \in B_n$ such that $\pi(\mathbf{1})= \mathbf{1}$, and the maximal is $2n-1$, corresponding to the elements $\pi \in B_n$ such that $\pi(\mathbf{1})= -\mathbf{1}$.\\
One the other hand, we deduce from Lemma \ref{lemone} that $\mathtt{inv}_1$ has $2n$ possible values. 
\end{proof}

\noindent Let us consider the enumeration basis $\mathsf{B} = (\mathsf{B_0}, \mathsf{B_1}, \mathsf{B_2}, \mathsf{B_3}, \dots)$ relative to the hyperoctahedral groups. Recall that $\mathsf{B_n}=2^nn!$ with corresponding maximal positive 
integer $\alpha_n = 2n+1$. The aim of this section is to prove the following theorem.

\begin{thm} \label{B2thm}
There is a one-to-one correspondence between the elements of $B_n$ and those of $<\mathsf{B_0}, \mathsf{B_1}, \dots, \mathsf{B_{n-1}}>$ with respect to the $i$-inversions. This bijection is given by
$$b_n: \left. \begin{array}{ccc} B_n & \rightarrow & <\mathsf{B_0}, \dots, \mathsf{B_{n-1}}> \\ 
\pi & \mapsto & \sum_{i=1}^n \mathtt{inv}_{n-i+1}(\pi) \mathsf{B_{i-1}} \end{array} \right..$$
\end{thm}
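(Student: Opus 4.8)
The plan is to show that the map $b_n$ is well-defined (lands in the target set), and then that it is a bijection by a counting/injectivity argument combined with an inductive decomposition of $B_n$. First I would check well-definedness: for $\pi \in B_n$, the coefficient of $\mathsf{B_{i-1}}$ in $b_n(\pi)$ is $\mathtt{inv}_{n-i+1}(\pi)$, and I must verify $0 \le \mathtt{inv}_{n-i+1}(\pi) \le \alpha_{i-1} = 2(i-1)+1 = 2i-1$. For the index $m = n-i+1$, Lemma \ref{lemequa} shows $\mathtt{inv}_m(\pi)$ counts certain subsets of $\{m+1,\dots,n\}$, a set of size $n-m = i-1$; in case $(i)$ the count is at most $i-1$, and in case $(ii)$ it is at most $1 + (i-1) + 2\cdot 0$ or similar, and the true maximum $2(n-m)+1 = 2i-1$ is attained exactly when $\pi(\mathbf{m}) = -\mathbf{m'}$ for the largest available label — this is the natural generalization of Lemma \ref{value1}, which handles the case $m=1$. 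So each digit lies in the correct range $\{0,\dots,2i-1\}$ and $b_n(\pi) \in\ <\mathsf{B_0},\dots,\mathsf{B_{n-1}}>$.

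Next, since by Theorem \ref{firsttheorem} the hyperoctahedral system is a genuine number system, the set $<\mathsf{B_0},\dots,\mathsf{B_{n-1}}>$ has exactly $\prod_{i=0}^{n-1}\beta_i = \prod_{i=0}^{n-1}(2i+2) = 2^n n! = |B_n|$ elements, and each element has a \emph{unique} representation with digits in the prescribed ranges (Lemma \ref{unicity}). Therefore it suffices to prove that $b_n$ is injective. Here I would argue by induction on $n$, peeling off the top coordinate: the leading digit of $b_n(\pi)$ is $\mathtt{inv}_n(\pi)$ applied at index $1$, i.e. $\mathtt{inv}_1$ in the ambient $B_n$ — wait, more precisely the coefficient of $\mathsf{B_{n-1}}$ is $\mathtt{inv}_1(\pi)$. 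By Lemma \ref{lemone}, $\mathtt{inv}_1\,\pi$ determines $\pi(\mathbf{1})$ completely (the three cases in that lemma's proof show the map $\pi(\mathbf{1}) \mapsto \mathtt{inv}_1\,\pi$ is injective on the $2n$ possible values of $\pi(\mathbf{1})$, matching Lemma \ref{value1}). Once $\pi(\mathbf{1})$ is fixed, the restriction of $\pi$ to the remaining coordinates $\{2,\dots,n\}$ (relabeled) is an element of a copy of $B_{n-1}$, and the remaining digits $\mathtt{inv}_{n-i+1}(\pi)$ for $i=1,\dots,n-1$ should coincide with the digits of $b_{n-1}$ applied to that restriction; applying the inductive hypothesis finishes injectivity.

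The main obstacle I anticipate is the compatibility step in the induction: verifying that $\mathtt{inv}_m(\pi)$, computed in $B_n$ via Lemma \ref{lemequa}, agrees with $\mathtt{inv}_{m-1}$ of the induced signed permutation on the smaller index set once $\pi(\mathbf{1})$ has been removed. The formulas in Lemma \ref{lemequa} depend only on the relative order of the values $|\pi(k)|$ for $k > m$ and on the sign of $\pi(\mathbf{m})$, and these are unaffected by deleting coordinate $1$ and order-preservingly relabeling — but making this precise requires care, since removing the label $|\pi(1)|$ shifts the labels $|\pi(k)|$ that exceed it, and one must check that $j > |\pi(k)|$ and $j < |\pi(k)|$ comparisons are preserved under this shift. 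Once that bookkeeping is done, everything else is routine: well-definedness plus injectivity plus the cardinality match $|B_n| = |<\mathsf{B_0},\dots,\mathsf{B_{n-1}}>|$ yields the bijection, and the explicit formula for $b_n$ is then exactly the statement.
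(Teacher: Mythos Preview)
Your proposal is correct and follows essentially the same route as the paper: induction on $n$, using Lemma~\ref{lemone} to show that the leading digit $\mathtt{inv}_1(\pi)$ determines $\pi(\mathbf{1})$, then deleting the first column and relabeling to reduce to $B_{n-1}$; the ``compatibility step'' you flag as the main obstacle is exactly what the paper handles via the construction of $\overline{\pi}$ and Equation~\eqref{hatover}. Your version is in fact slightly more complete, since you explicitly verify well-definedness of the digit ranges and invoke the cardinality equality $|B_n| = 2^n n! = |\langle \mathsf{B_0},\dots,\mathsf{B_{n-1}}\rangle|$ to pass from injectivity to bijectivity, steps the paper leaves implicit.
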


\begin{proof}
For $n=1$, we have trivially $b_1(\left( \begin{array}{c} \mathbf{1} \\ \mathbf{1} \end{array} \right)) = 0$ and
 $b_1(\left( \begin{array}{c} \mathbf{1} \\ \mathbf{-1} \end{array} \right))= \mathsf{B_0}$. From 
table \ref{B2table} of Example \ref{B2element}, we can deduce $b_2$. For instance
$b_2(\left( \begin{array}{cc} \mathbf{1} & \mathbf{2} \\ -\mathbf{2} & -\mathbf{1} \end{array} \right)) = 2\mathsf{B_1} + \mathsf{B_0}$.\\
Now, we assume that the bijection exists for any positive integer smaller than $n$ and prove by induction that $b_{n+1}$ also exists.\\
For $\pi \in B_{n+1}$ and $i \in [n+1]$, we write $\mathtt{sg}\,\pi(\mathbf{i})$ for the sign of $\pi(\mathbf{i})$.\\
Let $\pi = \left( \begin{array}{cccc} \mathbf{1} & \mathbf{2} & \dots & \mathbf{n+1}\\ 
\pi(\mathbf{1}) & \pi(\mathbf{2}) & \dots & \pi(\mathbf{n+1}) \end{array} \right) \in B_{n+1}$. We consider the $n$ rightmost columns of $\pi$, i.e. $\hat{\pi} = \left( \begin{array}{ccc} \mathbf{2} & \dots & \mathbf{n+1}\\ 
\pi(\mathbf{2}) & \dots & \pi(\mathbf{n+1}) \end{array} \right)$, and define $\overline{\pi} =
 \left( \begin{array}{ccc} \mathbf{2} & \dots & \mathbf{n+1}\\ 
\overline{\pi(\mathbf{2})} & \dots & \overline{\pi(\mathbf{n+1})} \end{array} \right)$ with
$$\overline{\pi(\mathbf{i})} = \left\{ \begin{array}{ll} \pi(\mathbf{i)}+\, \mathtt{sg} \big(\pi(\mathbf{i})\big) \, \mathbf{1} & \text{if}\ |\pi(i)| < |\pi(1)|,\\ \pi(\mathbf{i}) & \text{if}\ |\pi(i)| > |\pi(1)|.\end{array} \right.$$
Then, we get an element $\overline{\pi}$ of $B_n$ and we have
\begin{equation} \label{hatover}
\sum_{i=1}^n \mathtt{inv}_{n-i+2}(\hat{\pi}) \mathsf{B_{i-1}} = b_n(\overline{\pi}).
\end{equation}
We can now proceed of the proof of the existence of the bijection $b_{n+1}$.\\
Let $\sigma, \tau \in B_n$ such that $\sigma \neq \tau$. If
\begin{itemize}
\item[$\bullet$] $\sigma(\mathbf{1}) \neq \tau(\mathbf{1})$, then, from Lemma \ref{lemone}, we have
$\mathtt{inv}_{1}(\sigma)\mathsf{B_n} \neq \mathtt{inv}_{1}(\tau)\mathsf{B_n}$,
\item[$\bullet$] $\sigma(\mathbf{1}) = \tau(\mathbf{1})$, then, from Equation \ref{hatover}, we get
$$\sum_{i=1}^n \mathtt{inv}_{n-i+2}(\sigma) \mathsf{B_{i-1}} \neq \sum_{i=1}^n \mathtt{inv}_{n-i+2}(\tau) \mathsf{B_{i-1}}.$$
\end{itemize}
So, on one hand, the map $\pi \mapsto \sum_{i=1}^{n+1} \mathtt{inv}_{n-i+2}(\pi) \mathsf{B_{i-1}}$ is injective.\\ 
On the other hand, we deduce from Lemma \ref{value1} that $\mathtt{inv}_1\,B_{n+1} = \{0, \dots, \alpha_n\}$.
\end{proof}

\bibliographystyle{abbrvnat}

\begin{thebibliography}{50} 

\bibitem{bourbaki} Bourbaki, Nicolas (1968), ``Groupes et algèbres de Lie'', Ch. 4–6, Eléments
 de Mathématique, Fasc. XXXIV, Hermann, Paris; Masson, Paris, 1981.

\bibitem{bjorner-brento} Bjorner, Anders - Brenti, Francesco ``Combinatorics of Coxeter Groups'', Graduate Texts in Mathematics, vol. 231, Springer, 2005. 

\bibitem{cantor} Cantor, G. (1869),  Über einfache Zahlensysteme, Zeitschrift für Mathematik und Physik 14 (1869), 121–128.
 

\bibitem{knuth2} Knuth, D. E. (1997), ``Volume 2: Seminumerical Algorithms '', The Art of Computer
 Programming (3rd ed.), Addison-Wesley, p. 192, ISBN 0-201-89684-2.

\bibitem{laisant} Laisant, Charles-Ange (1888), ``Sur la numération factorielle, application aux permutations '', Bulletin de la Société
 Mathématique de France  16: 176–183.


\bibitem{lehmer} Lehmer, D.H. (1960), ``Teaching combinatorial tricks to a
 computer'', Proc. Sympos. Appl. Math. Combinatorial Analysis, Amer. Math. Soc. 10: 179–193

\bibitem{reiner} Reiner, Victor (1993), ``Signed Permutation Statistics'', Eur. J. Comb. 14(6): 553-567.







\end{thebibliography}

\end{document}